\begin{document}

%
%
%
 \newtheorem{thm}{Theorem}[section]
 \newtheorem{cor}[thm]{Corollary}
 \newtheorem{lem}[thm]{Lemma}{\rm}
 \newtheorem{prop}[thm]{Proposition}
 \newtheorem{defn}[thm]{Definition}{\rm}
 \newtheorem{assumption}[thm]{Assumption}
 \newtheorem{rem}[thm]{Remark}
 \newtheorem{ex}{Example}
\numberwithin{equation}{section}
\def\smu{{\rm supp}\,\mu}
\def\snu{{\rm supp}\,\nu_i}
\def\e{{\rm e}}
\def\x{\mathbf{x}}
\def\bsmalpha{{\small \mathbf{\alpha}}}
\def\btheta{\mathbf{\theta}}
\def\balpha{\mathbf{\alpha}}
\def\by{\mathbf{y}}
\def\bz{\mathbf{z}}
\def\F{\mathbf{F}}
\def\R{\mathbb{R}}
\def\T{\mathbf{T}}
\def\O{\mathbf{O}}
\def\U{\mathbf{U}}
\def\N{\mathbb{N}}
\def\K{\mathbf{K}}
\def\Q{\mathbf{Q}}
\def\M{\mathbf{M}}
\def\O{\mathbf{O}}
\def\bxi{\mathbf{\xi}}
\def\C{\mathbb{C}}
\def\P{\mathbf{P}}
\def\Z{\mathbf{Z}}
\def\H{\mathbf{H}}
\def\A{\mathbf{A}}
\def\V{\mathbf{V}}
\def\AA{\overline{\mathbf{A}}}
\def\B{\mathbf{B}}
\def\T{\mathbf{T}}
\def\c{\mathbf{C}}
\def\L{\mathbf{L}}
\def\bS{\mathbf{S}}
\def\I{\mathbf{I}}
\def\Y{\mathbf{Y}}
\def\TT{\mathcal{T}}
\def\X{\mathbf{X}}
\def\f{\mathbf{f}}
\def\z{\mathbf{z}}
\def\v{\mathbf{v}}
\def\y{\mathbf{y}}
\def\d{\hat{d}}
\def\ha{\hat{a}}
\def\hb{\hat{b}}
\def\bx{\mathbf{x}}
\def\y{\mathbf{y}}
\def\p{\mathbf{p}}
\def\tz{\tilde{\mathbf{z}}}
\def\g{\mathbf{g}}
\def\w{\mathbf{w}}
\def\b{\mathbf{b}}
\def\a{\mathbf{a}}
\def\h{\mathbf{h}}
\def\v{\mathbf{v}}
\def\u{\mathbf{u}}
\def\s{\mathcal{S}}
\def\cc{\mathcal{C}}
\def\co{{\rm co}\,}
\def\vol{{\rm vol}\,}
\def\om{\mathbf{\Omega}}

\title[bounding the support of a measure]{Bounding the support of a measure 
from its marginal moments}
\author{Jean B. Lasserre}
\address{LAAS-CNRS and Institute of Mathematics\\
University of Toulouse\\
LAAS, 7 avenue du Colonel Roche\\
31077 Toulouse C\'edex 4,France}
\email{lasserre@laas.fr}
\date{}

\begin{abstract}
Given all moments of the marginals of
a measure $\mu$ on $\R^n$, one provides
(a) explicit bounds on its support and (b)
a numerical scheme to compute the smallest box 
that contains the support of $\mu$. 
\end{abstract}

\keywords{inverse problems; moments of a measure; semidefinite programming}

\subjclass{60B05 90C22}

\maketitle

\section{Introduction}

Inverse problems in probability are ubiquitous in several important applications, and among them shape reconstruction problems. For instance, exact recovery of two-dimensional objects from finitely many moments is possible for polygons 
and so-called quadrature domains as shown in Golub et al. \cite{polygon} and Gustafsson et al. \cite{quad}, respectively. But so far, there is no inversion algorithm 
from moments for $n$-dimensional shapes.
However, more recently Cuyt et al. \cite{golub1} have shown how to approximately recover numerically an unknown density $f$ defined on a compact region of $\R^n$, from the 
only knowledge of its moments. So when $f$ is the indicator function of a compact 
set $A\subset\R^n$ one may thus 
recover the shape of $A$ with good accuracy,
based on moment information only. 
The elegant methodology developed in \cite{golub1} 
is based on multi-dimensional homogeneous Pad\'e approximants and uses a nice Pad\'e {\it slice property}, 
the analogue for the moment approach of 
the Fourier slice theorem for the Radon transform (or projection) approach;
see \cite{golub1} for an illuminating discussion.

In this paper we are interested in the following inverse problem. 
Given an {\it arbitrary} finite Borel measure $\mu$ on $\R^n$
(not necessarily having a density with respect to the Lebesgue measure), can we compute (or at least approximate) the smallest
box $\prod_{i=1}^n[a_i,b_i]\subset\R^n$ which contains the support of $\mu$ (not necessarily compact), from the only knowledge of its moments? 

{\bf Contribution.}
Obviously, as we look for a box, the problem reduces to find for each $i=1,\ldots,n$,
the smallest interval $[a_i,b_i]$ (not necessarily compact) that contains the support of the marginal $\mu_i$ of $\mu$. 
Of course, to bound $a_i$  and $b_i$,
one possibility is to
compute zeros of the polynomials $(p_d)$, $d\in\N$,
orthogonal with respect to the measure $\mu_i$. Indeed, 
for every $d$, the smallest (resp. largest) zero of $p_d$
provides an upper bound on $a_i$ 
(resp. a lower bound on $b_i$), and there is a systematic
way to compute the $p_d$'s from from the given 
moments of $\mu$; see e.g. Gautschi \cite[\S 1.2 and \S 2.1]{gautschi}.

Our contribution is to
provide a convergent numerical scheme for computing 
this smallest interval $[a_i,b_i]$, which  (i) is based on
the only knowledge of the moments of the marginals $\mu_i$, $i=1,\ldots,n$, and (ii) {\em avoids} computing orthogonal polynomials.
For each $i$, it consists of solving $2$ hierarchies (associated with each of the end points $a_i$ and $b_i$)
of so called {\it semidefinite programs}\footnote{A semidefinite program is a convex optimization problem that can be solved (up to arbitrary but fixed precision) in time polynomial in the input size of the problem, and for which efficient public softwares are available; see e.g. \cite{boyd}} in only one variable (and therefore those semidefinite programs are generalized eigenvalue problems for which even more specialized softwares exist). Importantly, we do not make any assumption on $\mu$ and in particular, 
$\mu$ may not have a density with respect to the Lebesgue measure as in the above cited works. 
In solving the two semidefinite 
programs at step $d$ of the hierarchy, one provides an 
inner approximation $[a_d,b_d]\subset [a_i,b_i]$ such that
the sequence $(a_d)$ (resp. $(b_d)$), $d\in\N$,
is monotone nonincreasing (resp. nondecreasing) and converges to $a_i$ (resp. to $b_i$) as $d\to\infty$ (with possibly
$a_i=-\infty$ and/or $b_i=+\infty$). 
Interestingly, some explicit upper (resp. lower) bounds on $a_i$ (resp. $b_i$) in terms of the moments 
of $\mu_i$ are also available.

\section{Notation and definitions}
Let $\N$ be the set of natural numbers and
denote by $\x=(x_1,\ldots,x_n)$ a vector of $\R^n$ 
whereas $x$ will denote a scalar. Let
$\R[x]$ be the ring of real univariate polynomials in 
the single variable $x$, and denote by $\R[x]_d$ the vector space of polynomials of degree at most $d$.
Let $\Sigma[x]\subset\R[x]$ be the set of polynomials that are sums of squares (s.o.s.), and $\Sigma[x]_d$ its subspace of s.o.s. polynomials of degree at most $2d$.
In the canonical basis $(x^k)$, $k=0,\ldots,d$, of 
$\R[x]_d$, a polynomial $f\in\R[x]_d$ is written
\[x\mapsto f(x)\,=\,\sum_{k=0}^df_k\,x^k,\]
for some vector $\f=(f_k)\in\R^{d+1}$.

\paragraph{{\bf Moment matrix}} 
Given a infinite sequence $\y:=(y_k)$, $k\in\N$, indexed in the canonical basis $(x^k)$ of $\R[x]$, let $\H_d(\y)\in\R^{(d+1)\times (d+1)}$ be the real Hankel matrix defined by:
\begin{equation}
\label{mx}
\H_d(\y)(i,j)\,=\,y_{i+j-2},\qquad \forall\,i,j\leq d.
\end{equation}
The matrix $\H_d(\y)$ is called the {\it moment} matrix associated with the sequence $\y$ (see e.g. Curto and Fialkow \cite{curto} and Lasserre \cite{lasserre}). 
If $\y$ has a {\it representing measure} $\mu$ (i.e., if there exists a finite Borel measure $\mu$ such that
$y_k=\int \x^k d\mu$ for every $k\in\N$)
then
\begin{equation}
\label{mom}
\langle \f,\H_d(\y)\f\rangle \,=\,\int f(x)^2\,d\mu(x)\,\geq\,0,\qquad \forall f\in
\R[x]_d,
\end{equation}
so that $\H_d(\y)\succeq0$, where for a real symmetric matrix $\A$, the notation $\A\succeq0$ (resp. $\A\succ0$) stands for $\A$ is positive semidefinite (resp. positive definite).
\vspace{0.1cm}

\paragraph{{\bf Localizing matrix}}
Similarly, given $\theta\in\R[x]_s$ with 
vector of coefficients $(\theta_k)$, let $\H_d(\theta \,\y)\in\R^{(d+1)\times (d+1)}$ be 
the real symmetric matrix defined by:
\begin{equation}
\label{localdef}
\H_d(\theta\,\y)(i,j)\,:=\,
\sum_{k=0}^s\theta_ky_{i+j+k-2},\qquad\forall\, i,j\leq d.\end{equation}
The matrix $\H_d(\theta\,\y)$ is called the {\it localizing} matrix associated with the sequence $\y$ and the polynomial $\theta$ (see again Lasserre \cite{lasserre}). Notice that the localizing matrix with respect to the constant polynomial $\theta\equiv 1$ is the moment matrix $\H_d(\y)$ in (\ref{mx}).
If $\y$ has a representing measure $\mu$ 
with support contained in the level set $\{x\in\R:\: \theta (x)\geq0\}$, then
\begin{equation}
\label{local}
\langle \f,\H_d(\theta\,\y)\f\rangle \,=\,\int f(x)^2\theta(x)\,d\mu(x)\,
\geq\,0\qquad 
\forall\,f\in\R[x]_d,
\end{equation}
so that $\H_d(\theta\,\y)\succeq0$.\\

Finally, for a finite Borel measure $\mu$, 
denote its support by $\smu$, that is, $\smu$ is the smallest closed set $B$ such that $\mu(B^c)=0$
(where $B^c$ denotes the complement of $B$).

\section{Main result}

We may and will restrict to the one-dimensional case because
if $\mu$ is a finite Borel measure on $\R^n$, and if we
look for a box $\B:=\prod_{i=1}^n[a_i,b_i]$ such that
$\smu\subseteq\B$, then we have the following result.
For every $i=1,\ldots,n$, let $\mu_i$ be the marginal of $\mu$ with respect to the variable $x_i$.

\begin{lem}
\label{lem1}
Let $\B\subset\R^n$ be the box $\prod_{i=1}^n[a_i,b_i]$ with possibly $a_i=-\infty$ and/or $b_i=+\infty$.
Then $\smu\subseteq\B$ if and only if
$\smu_i\subseteq [a_i,b_i]$ for every $i=1,\ldots,n$.
\end{lem}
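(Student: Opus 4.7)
The plan is to unwind definitions. Recall that for a closed set $B\subset\R^n$, one has $\mathrm{supp}\,\mu\subseteq B$ if and only if $\mu(\R^n\setminus B)=0$, since $\mathrm{supp}\,\mu$ is by definition the smallest closed set of full $\mu$-measure; the analogous statement holds for each $\mu_i$ on $\R$. The marginals themselves are characterized by $\mu_i(A)=\mu\bigl(\pi_i^{-1}(A)\bigr)$, where $\pi_i\colon\R^n\to\R$ is the projection onto the $i$-th coordinate. Both directions will follow from combining these two facts with the set-theoretic identity
\[
\B^c \,=\,\bigcup_{i=1}^n \pi_i^{-1}\bigl([a_i,b_i]^c\bigr).
\]

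For the forward direction, I would assume $\mathrm{supp}\,\mu\subseteq\B$, hence $\mu(\B^c)=0$. Fixing $i$, the inclusion $\pi_i^{-1}([a_i,b_i]^c)\subseteq\B^c$ together with monotonicity yields $\mu_i([a_i,b_i]^c)=\mu\bigl(\pi_i^{-1}([a_i,b_i]^c)\bigr)=0$, and closedness of $[a_i,b_i]$ gives $\mathrm{supp}\,\mu_i\subseteq[a_i,b_i]$. For the converse, assume $\mathrm{supp}\,\mu_i\subseteq[a_i,b_i]$ for every $i$, so that $\mu\bigl(\pi_i^{-1}([a_i,b_i]^c)\bigr)=\mu_i([a_i,b_i]^c)=0$. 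Applying finite subadditivity to the displayed identity gives $\mu(\B^c)\leq\sum_{i=1}^n\mu_i([a_i,b_i]^c)=0$, and since $\B$ is closed, $\mathrm{supp}\,\mu\subseteq\B$.

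There is essentially no obstacle here; the lemma is a routine consequence of the fact that a box, being a finite product, has complement equal to a finite union of coordinate cylinders, so membership in the support is governed coordinatewise. The only minor point worth flagging is that the argument is insensitive to whether some $a_i$ or $b_i$ is infinite: in that case $[a_i,b_i]^c$ is either empty or an open half-line, and the same chain of equalities and subadditivity still applies verbatim.
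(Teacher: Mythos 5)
Your proof is correct and follows essentially the same route as the paper's: both hinge on the decomposition of $\B^c$ as the finite union of coordinate cylinders $A_i=\pi_i^{-1}([a_i,b_i]^c)$, the identity $\mu(A_i)=\mu_i([a_i,b_i]^c)$ defining the marginal, and the characterization of support via null complement. You merely spell out the monotonicity and subadditivity steps that the paper leaves implicit.
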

\begin{proof}
For every $i=1,\ldots,n$, let $A_i\subset\R^n$
be the Borel set $\{\x\in\R^n\,:\, x_i\in \R\setminus [a_i,b_i]\}$.
Then $0=\mu(A_i)=\mu_i(\R\setminus [a_i,b_i])$, shows that $\smu_i\subseteq [a_i,b_i]$, $i=1,\ldots,n$. Conversely, if
$\mu_i\subseteq [a_i,b_i]$, $i=1,\ldots,n$, then
$0=\mu_i(\R\setminus [a_i,b_i])=\mu(A_i)$, $i=1,\ldots,n$. But
since $\B^c=\cup_{i=1}^nA_i$ we also obtain
$\mu(\B^c)=0$.
\end{proof}
So Lemma \ref{lem1} tells us that it is enough to consider separate conditions for the marginals $\mu_i$, $i=1,\ldots,n$. Therefore, all we need to know is the sequence of moments
\[\y^i_k:=\int x_i^k\,d\mu(\x)\,=\,\int x^k\,d\mu_i(x),\qquad k=0,1,\ldots\]
of the marginal $\mu_i$ of $\mu$, for every $i=1,\ldots,n$. \\

Hence we now consider the one-dimensional case.
For a real number $a$, let $\theta_a\in\R[x]$ be the polynomial $x\mapsto \theta_{a}(x)=(x-a)$.
Recall that the support of a finite Borel measure $\mu$ on $\R$ (denoted $\smu$)
is the smallest closed set $B$ such that $\mu(\R\setminus B)=0$. For instance is $\mu$ is supported on 
$(a,b]\cup [c,d)\cup\{e\}$, with $a<b<c<d<e$ then
${\rm supp}\,\mu$ is the closed set $[a,b]\cup [c,d]\cup \{e\}$ and is contained in the interval $[a,e]$.

\subsection{Bounds on ${\rm supp}\,\mu$}
We first derive bounds on scalars $a$ and $b$ that
satisfy $\smu\subseteq (-\infty,a]$ and/or
$\smu\subseteq [b,+\infty)$.
\begin{prop}
\label{th1}
Let $\mu$ be a finite and non trivial Borel measure on the real line $\R$, with associated sequence of moments $\y=(y_k)$, $k\in\N$, all finite. 
Then: 

{\rm (i)} ${\rm supp}\,\mu\subseteq [a,+\infty)$ if and only if 
\begin{equation}
\label{lem1-1}
\H_d(\theta_a\,\y)\succeq0,\qquad\forall d\in\N.
\end{equation}

{\rm (ii)} ${\rm supp}\,\mu\subseteq [-\infty,b]$ if and only if 
\begin{equation}
\label{lem1-2}
\H_d(-\theta_b\,\y)\succeq0,\qquad\forall d\in\N.
\end{equation}
For each fixed $d\in\N$, the condition $\H_d(\theta_a\,\y)\succeq0$
(resp. $\H_d(-\theta_b\,\y)\succeq0$) determines a basic semi-algebraic set of the form 
$\{(a,\y)\,:\,p_{dk}(a,\y)\geq0,\,k=0,\ldots d-1\}$
(resp. $\{(a,\y)\,:\,(-1)^{d-k}p_{dk}(b,\y)\geq0,\,k=0,\ldots d-1\}$)
for some polynomials $(p_{dk})\subset\R[x,\y]$.

With $\y$ and $d$ fixed, the condition (\ref{lem1-1})
(resp. (\ref{lem1-2})) yields an upper bound $a\leq \underline{a}_d$ (resp. a lower bound $b\geq\overline{b}_d$), and the 
sequence $(\underline{a}_d)$ (resp. $(\overline{b}_d)$), $d\in\N$, is monotone nonincreasing (resp. nondecreasing).
In particular, 
\begin{eqnarray}
\label{b1}
a&\leq &\min\left[\frac{y_1}{y_0},\frac{y_1+y_3}{y_0+y_2}\right]\quad \mbox{and}\quad a\leq\min\left[\frac{y_1}{y_0},\frac{y_3}{y_2},\frac{y_1+y_3}{y_0+y_2}\right]
\mbox{ if }y_2\neq0\\
\label{b2}
b&\geq& \max\left[\frac{y_1}{y_0},\frac{y_1+y_3}{y_0+y_2}\right]\quad \mbox{and}\quad b\geq\max\left[\frac{y_1}{y_0},\frac{y_3}{y_2},\frac{y_1+y_3}{y_0+y_2}\right]
\mbox{ if }y_2\neq0,\end{eqnarray}
as well as 
\begin{eqnarray}
\label{b3}
a&\leq&\frac{y_0y_3-y_1y_2-
\sqrt{(y_0y_3-y_1y_2)^2-4(y_0y_2-y_1^2)(y_1y_3-y_2^2)}}{2(y_0y_2-y_1^2)}\\
\label{b4}
b&\geq& \frac{y_0y_3-y_1y_2+
\sqrt{(y_0y_3-y_1y_2)^2-4(y_0y_2-y_1^2)(y_1y_3-y_2^2)}}{2(y_0y_2-y_1^2)}\end{eqnarray}
if $(y_0y_3-y_1y_2)^2\geq 4(y_0y_2-y_1^2)(y_1y_3-y_2^2)$.
\end{prop}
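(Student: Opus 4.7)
My plan is to treat part (i) in detail and then deduce (ii) by the symmetric substitution $x-a\mapsto b-x$; the remaining assertions (the semi-algebraic form, the monotonicity of $\underline{a}_d$ and $\overline{b}_d$, and the explicit bounds (\ref{b1})--(\ref{b4})) are routine consequences of (i)--(ii) together with elementary linear algebra. The forward direction ($\Rightarrow$) of (i) is immediate from (\ref{local}) with $\theta=\theta_a$: if $\smu\subseteq [a,+\infty)$ then $\theta_a\ge 0$ on $\smu$, hence $\H_d(\theta_a\,\y)\succeq 0$ for every $d\in\N$.

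For the converse ($\Leftarrow$), I unpack the hypothesis: $\H_d(\theta_a\,\y)\succeq 0$ for all $d$ is equivalent to $\int f(x)^2(x-a)\,d\mu(x)\ge 0$ for every $f\in\R[x]$. Since every nonneg univariate polynomial is a sum of two squares, this upgrades to $\int p(x)(x-a)\,d\mu(x)\ge 0$ for every nonneg polynomial $p$. I would then argue by contradiction: if some $c\in\smu$ satisfies $c<a$, pick $\epsilon>0$ with $c+\epsilon<a$ and $\mu([c-\epsilon,c+\epsilon])>0$, so $(x-a)\le -\epsilon$ on that interval. Take a continuous nonneg bump $h$ compactly supported in $(c-\epsilon,c+\epsilon)$ with $\int h\,d\mu>0$; then $\int h(x)(x-a)\,d\mu(x)<0$. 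Approximating $h$ by globally nonneg polynomials (Weierstrass on a compact containing $\mathrm{supp}\,h$, combined with the sum-of-squares trick) and passing to the limit would yield the contradiction $0>\int h(x)(x-a)\,d\mu(x)\ge 0$. The main technical obstacle is controlling the non-compact tails of $\mu$, since the polynomial approximants do not decay at infinity; this is where the assumption that all moments of $\mu$ are finite is essential, as it provides $\mu$-integrable majorants of the form $C(1+x^{2N})|x-a|$ that allow dominated convergence to handle the tail contributions.

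For the semi-algebraic structure, $\H_d(\theta_a\,\y)\succeq 0$ is equivalent to nonnegativity of all its principal minors; each entry of $\H_d(\theta_a\,\y)$ is affine in $a$ with coefficients linear in $\y$, so each minor is a polynomial $p_{dk}(a,\y)$ in $(a,\y)$, producing the claimed basic semi-algebraic description. For the $b$-case, since $\H_d(-\theta_b\,\y)=-\H_d(\theta_b\,\y)$, an $(r\times r)$ principal minor of $\H_d(-\theta_b\,\y)$ differs from the corresponding minor of $\H_d(\theta_b\,\y)$ by a factor $(-1)^{r}$; tracking $r$ in terms of $d$ and $k$ yields the sign $(-1)^{d-k}$. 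Monotonicity of $\underline{a}_d$ and $\overline{b}_d$ follows because $\H_d(\theta_a\,\y)$ is a leading principal submatrix of $\H_{d+1}(\theta_a\,\y)$, so PSDness of the latter forces PSDness of the former; hence $\{a:\H_{d+1}(\theta_a\,\y)\succeq 0\}\subseteq\{a:\H_d(\theta_a\,\y)\succeq 0\}$ and $\underline{a}_{d+1}\le\underline{a}_d$, with the symmetric statement for $\overline{b}_d$.

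Finally, the explicit bounds are obtained by writing out the conditions for small $d$. For $d=0$, $\H_0(\theta_a\,\y)=y_1-ay_0\ge 0$ with $y_0>0$ (by nontriviality of $\mu$) gives $a\le y_1/y_0$. For $d=1$, PSDness of $\H_1(\theta_a\,\y)$ requires both diagonal entries nonneg (hence $a\le y_1/y_0$, and if $y_2>0$ also $a\le y_3/y_2$) and the sum of diagonals nonneg (hence $a\le (y_1+y_3)/(y_0+y_2)$), which accounts for (\ref{b1}). Expanding the $2\times 2$ determinant yields the quadratic $(y_0y_2-y_1^2)a^2+(y_1y_2-y_0y_3)a+(y_1y_3-y_2^2)\ge 0$; since $y_0y_2-y_1^2\ge 0$ by Cauchy--Schwarz and is assumed strictly positive, solving for the smaller root under the discriminant condition produces (\ref{b3}). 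The symmetric bounds (\ref{b2}) and (\ref{b4}) on $b$ follow by the same computation with $x-a$ replaced by $b-x$ throughout.
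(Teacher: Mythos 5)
Your treatment of the forward direction of (i), the monotonicity of $\underline{a}_d$, and the explicit $d=1$ computations (\ref{b1})--(\ref{b4}) agree with the paper and are correct. However, there are two genuine points of concern.

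First, the converse direction of (i) is where your argument has a real gap, and your proposed fix does not close it. You acknowledge the obstacle (controlling the non-compact tails), but "dominated convergence with majorants of the form $C(1+x^{2N})|x-a|$" cannot work: a fixed-degree majorant cannot dominate a sequence of nonnegative polynomial approximants whose degrees necessarily grow, and the approximants will in general blow up at infinity in a way that makes $\int p_n(x)(x-a)\,d\mu$ stay nonnegative even though $\int h(x)(x-a)\,d\mu<0$. More fundamentally, the equivalence cannot be established by any such direct approximation argument in full generality: for a moment-indeterminate measure, the conditions $\H_d(\theta_a\,\y)\succeq 0$ for all $d$ only guarantee that \emph{some} representing measure for $\y$ is supported in $[a,+\infty)$ (the Stieltjes solvability criterion), not that the given $\mu$ is. The paper handles this by citing Theorem 3.2 of Lasserre's book, i.e.\ the classical Hamburger/Stieltjes moment problem theorem, which is the correct (and essentially unavoidable) route; it is exactly the kind of nontrivial fact that cannot be rederived from elementary approximation.

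Second, your semi-algebraic description is not the one asserted in the statement. You characterize positive semidefiniteness by nonnegativity of \emph{all} principal minors, which yields on the order of $2^{d}$ inequalities of various sizes, not the $d$ polynomials $p_{dk}$, $k=0,\ldots,d-1$, claimed. The paper instead expands the characteristic polynomial
$c(t)=\det(tI-\H_d(\theta_a\,\y))=t^d+\sum_{k=0}^{d-1}(-1)^{d-k}p_{dk}(a,\y)t^k$
and invokes Descartes' rule of signs: all roots nonnegative if and only if $p_{dk}(a,\y)\geq 0$ for every $k$. This gives exactly the claimed $d$-term description, and the sign pattern $(-1)^{d-k}$ in the $b$-case drops out immediately from the identity $\tilde c(t)=\det(tI-\H_d(-\theta_b\,\y))=(-1)^d c(-t)$. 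Your minor-based approach cannot be indexed cleanly by a single $k$ in the way the proposition states, and the sign-tracking by minor size $r$ does not reproduce $(-1)^{d-k}$ without first collapsing all minors of equal size, which is precisely what the characteristic-polynomial coefficients (sums of same-size principal minors) accomplish.
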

\begin{proof}
(\ref{lem1-1}) and (\ref{lem1-2}) is well-known and can be found in e.g. Lasserre \cite[Theorem 3.2]{lasserre}.
Next, write 
the characteristic polynomial $t\mapsto c(t)$ of $\H_d(\theta_a,\y)$
in the form 
\[c(t)\:\left(=\,{\rm det}\,(tI-\H_d(\theta_a\,\y))\right)\,=\,t^d+
\sum_{k=0}^{d-1}(-1)^{d-k}p_{dk}(a,\y)\,t^k,\qquad t\in\R,\]
for some polynomials $(p_{dk})\subset\R[a,\y]$, and where 
$p_{dk}$ is of degree $k$ in the variable $a$. Then
$\H_d(a\,\y)\succeq0$, if and only if $c$ has all its roots nonnegative, which in turn, by Descarte's rule, happens if and only if $p_{dk}(a,\y)\geq0$, for every $k=0,\ldots,d-1$.
In fact $(a,\y)$ belong to the closure of
a convex connected component 
of $\{(a,\y): p_{d0}(a,\y)\,(={\rm det}\,H_d(\theta_a\,\y))>0\}$.
A similar argument applies for (\ref{lem1-2}) with now the characteristic polynomial
\[\tilde{c}(t)\,=\,{\rm det}\,(tI-\H_d(-\theta_b\,\y))\,=\,
(-1)^dc(-t)\,=\,
t^d+\sum_{k=0}^{d-1}p_{dk}(b,\y)\,t^k,\qquad t\in\R.\]
So with $\y$ and $d$ fixed, 
$a\mapsto p_{dk}(a,\y)$ is a univariate polynomial for every
$k$, and so
the conditions (\ref{lem1-1}) provide a bound of the form
$a\leq \underline{a}_d$ for some $\underline{a}_d$
since if $a$ satisfies (\ref{lem1-1})
then so does $a'\leq a$. 
Similarly, the conditions (\ref{lem1-2}) provide a bound of the form  $b\geq \overline{b}_d$ since if $b$ satisfies (\ref{lem1-2})
then so does $b'\geq b$. 
The scalar $\underline{a}_d$ (resp. $\overline{b}_d$) may be taken as the smallest (resp. largest) root of the polynomial
$x\mapsto p_{d0}(x,\y)$;
bounds in terms of the coefficients 
of $p_{d0}(x,\y)$ are available in the literature.

Finally (\ref{b1})-(\ref{b4}) are obtained with $d=1$ in which case (\ref{lem1-1}) and (\ref{lem1-2}) read:
\[\left[\begin{array}{cc}y_1-ay_0&y_2-ay_1\\y_2-ay_1&y_3-ay_2\end{array}\right]\,\succeq\,0;\qquad
\left[\begin{array}{cc}by_0-y_1&by_1-y_2\\ by_1-y_2&
by_2-y_3\end{array}\right]\,\succeq\,0.\]
\end{proof}
\vspace{0.2cm}

\subsection{Computing the smallest interval $[a,b]\supseteq\smu$}~

Theorem \ref{th1} provides bounds 
(some of them explicit) in terms of bounds on the largest (or smallest) root of some univariate polynomial whose coefficients
are polynomials in $\y$.
But one may also get numerical sharp bounds via solving 
the following sequence of semidefinite programs, indexed by $d$:
\begin{eqnarray}
\label{a11}
a_d&=&\max_{a}\,\left\{\, a\::\: \H_d(\theta_{a}\,\y)\succeq0\:\right\}\\
\label{a22}
b_d&=&\min_{b}\,\left\{\, b\::\: \H_d(\theta_{b}\,\y)\succeq0\:\right\},
\end{eqnarray}
where $\H_d(\theta_*\,\y)$ is the localizing matrix associated with
the polynomial $\theta_*\in\R[x]$. Observe that 
(\ref{a11}) and (\ref{a22}) are semidefinite programs with only one variable! For instance for $d=1$, (\ref{a11}) reads
\[a_1=\displaystyle\max_{a}\,\left\{\,a\::\:
\left[\begin{array}{cc}
y_1-ay_0& y_2-ay_1\\
y_2-ay_1&y_3-ay_2\end{array}\right]\succeq0\,\right\},\]
whereas (\ref{a22}) reads
\[b_1=\displaystyle\min_{b}\,\left\{\,b\::\:
\left[\begin{array}{cc}
by_0-y_1& by_1-y_2\\
by_1-y_2&by_2-y_3\end{array}\right]\,\succeq0\,\right\}\]
For more details on semidefinite programming the interested reader is referred to e.g. \cite{boyd}.
And we obtain:
\begin{thm}
\label{th2}
Let $\mu$ be a finite Borel measure with all 
moments $\y=(y_k)$ finite.  Then ${\rm supp}\,\mu\subseteq [a^*,b^*]$, with possibly $a^*=-\infty$ and/or $b^*=+\infty$, and where:

(i) $a_d$ is an optimal solution of (\ref{a11}) 
for all $d\in\N$, and the sequence $(a_d)$, $d\in\N$, is monotone nonincreasing with $a_d\downarrow a^*$ as $d\to\infty$.

(ii) $b_d$ is an optimal solution of (\ref{a22}) 
for all $d\in\N$, and and the sequence $(b_d)$, $d\in\N$, is monotone nondecreasing with $b_d\uparrow a^*$ as 
$d\to\infty$.

(iii) $a^*$ (resp. $b^*$) is the largest (resp. smallest)
scalar such that $\smu\subseteq [a^*,b^*]$.
\end{thm}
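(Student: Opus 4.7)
The plan is to combine the iff characterization from Proposition \ref{th1} with a principal-submatrix monotonicity argument, and to extract from the proof of Proposition \ref{th1} the key fact that the feasible sets are intervals closed on the relevant side.

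First, I would check that each SDP in (\ref{a11})--(\ref{a22}) is finitely attained. The feasible set $F_d^-:=\{a\in\R:\H_d(\theta_a\,\y)\succeq 0\}$ is closed and convex since the entries of $\H_d(\theta_a\,\y)$ depend affinely on $a$. From the computation at the end of the proof of Proposition \ref{th1}, $F_d^-$ is in fact downward closed: if $a\in F_d^-$ then any $a'\le a$ also lies in $F_d^-$. The $(1,1)$-entry $y_1-ay_0$ forces $a\le y_1/y_0$ on $F_d^-$ (using $y_0>0$), so $F_d^-$ is bounded above; for $a$ very negative the contribution $(-a)\,\H_d(\y)\succeq 0$ dominates and the remaining fixed part cannot push the quadratic form below $0$, so $F_d^-$ is nonempty. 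Hence $F_d^-=(-\infty,a_d]$ with $a_d$ finite and attained, and symmetrically $\{b:\H_d(-\theta_b\,\y)\succeq 0\}=[b_d,+\infty)$ with $b_d$ attained.

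Next I would establish monotonicity. The key observation is that $\H_d(\theta_a\,\y)$ is the leading $(d+1)\times(d+1)$ principal submatrix of $\H_{d+1}(\theta_a\,\y)$, so PSD-ness at level $d+1$ forces PSD-ness at level $d$. Consequently $F_{d+1}^-\subseteq F_d^-$, i.e.\ $a_{d+1}\le a_d$, and $(a_d)$ is monotone nonincreasing. Set $a^*:=\lim_d a_d\in[-\infty,a_1]$; the analogous argument gives $(b_d)$ nondecreasing with limit $b^*\in[b_1,+\infty]$.

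Finally I would identify $a^*$ and $b^*$ with the extremal endpoints. Any $a$ satisfying $\smu\subseteq[a,+\infty)$ satisfies $\H_d(\theta_a\,\y)\succeq 0$ for every $d$ by Proposition \ref{th1}(i), hence $a\le a_d$ for every $d$ and therefore $a\le a^*$. Conversely, when $a^*>-\infty$, downward closure of $F_d^-$ combined with $a^*\le a_d$ yields $\H_d(\theta_{a^*}\,\y)\succeq 0$ for every $d$, and the converse direction of Proposition \ref{th1}(i) gives $\smu\subseteq[a^*,+\infty)$, so $a^*$ is the largest such scalar; when $a^*=-\infty$, the first inequality rules out any finite lower bound on $\smu$. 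The argument for $b^*$ via Proposition \ref{th1}(ii) is identical, and combining the two inclusions yields $\smu\subseteq[a^*,b^*]$ with $[a^*,b^*]$ the smallest such interval. The main obstacle is more notational than conceptual: one must ensure that the limit $a^*$ itself (not merely every $a<a^*$) is feasible for each finite-level SDP, so that the converse direction of Proposition \ref{th1}(i) can be applied to $a^*$ directly. This is precisely what the downward closure of $F_d^-$ extracted from the proof of Proposition \ref{th1} delivers; without it one would need a separate limiting argument based on closedness of the PSD cone.
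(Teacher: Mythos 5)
Your overall structure matches the paper's: attainment of the SDPs, nesting of feasible sets to get monotonicity, and using Proposition~\ref{th1} in both directions to identify $a^*$ as the largest scalar with $\smu\subseteq[a^*,+\infty)$. The monotonicity-via-principal-submatrix observation and the identification of $a^*$ are correct and essentially what the paper does (the paper reaches $\H_d(\theta_{a^*}\y)\succeq 0$ by closedness of the PSD cone; your downward-closure route is equivalent).

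The genuine gap is in the nonemptiness step. You assert that for $a$ very negative, ``the contribution $(-a)\H_d(\y)\succeq 0$ dominates,'' but this is false for PSD matrices in general: if $B\succeq 0$ is singular, $A+tB$ need not become PSD for any $t$, e.g.\ $A=\mathrm{diag}(-1,0)$, $B=\mathrm{diag}(0,1)$. When $\H_d(\y)$ is singular (which happens precisely when $\mu$ has few atoms) your argument as stated does not go through. It can be repaired, but only by invoking the additional fact — valid because $\y$ has the representing measure $\mu$ — that $\ker\H_d(\y)\subseteq\ker\H_d(x\,\y)$ (indeed $\int f^2\,d\mu=0$ forces $f=0$ $\mu$-a.e., hence $\int xf^2\,d\mu=0$), so both forms descend to $\R^{d+1}/\ker\H_d(\y)$ where $\H_d(\y)$ is positive definite and the domination estimate holds. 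You should state this explicitly. The paper sidesteps the issue altogether: it invokes Tchakaloff's theorem to produce a finitely supported measure $\nu$ with the same moments as $\mu$ up to degree $2d+1$; since $\H_d(\theta_a\,\y)$ depends only on those moments and $\smu[\nu]\subseteq[x_0,+\infty)$, the smallest atom $x_0$ is feasible for \eqref{a11}. Either fix is fine, but as written your domination claim is not a valid step.
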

\begin{proof}
We prove the statements for (i) and (iii)
only because similar arguments hold for (ii).
We first prove that (\ref{a11}) has always a feasible solution.
If $\smu\subset [a,+\infty)$ for some $a>-\infty$,
then $a$ is obviously feasible for the semidefinite program
(\ref{a11}), for every $d\in\N$. If
there is no such $a$, consider the finite sequence of moments
$\y_d=(y_0,\ldots,y_{2d+1})$. By Tchakaloff's theorem 
(see e.g. \cite{bayer,putinar}, \cite[Theorem B.12]{lasserre}) there exists a measure $\nu$ supported on 
finitely many points $x_0\leq x_1\leq\cdots \leq x_{t}$, with $t\leq 2d+2$ (hence 
${\rm supp}\,\nu=\cup_{i=0}^{t}\{x_i\}\subset [x_0,+\infty)$), and with same moments as $\mu$, up to degree $2d+1$. Hence in view of what precedes,
$x_0$ is feasible for (\ref{a11}). Next, as every feasible solution
is bounded above by $y_1/y_0$ and as we maximize, 
it follows that (\ref{a11}) has an optimal solution $a_d$
for every $d\in\N$.

Next, observe that $a_d\leq a_k$ 
whenever $d\geq k$ because the feasible set of 
(\ref{a11}) for $d$ is contained in that for $k$ and every feasible solution is bounded above by $y_1/y_0$.
Therefore the sequence $(a_d)$, $d\in\N$, is monotone nonincreasing and thus, converges to $a^*$ with possibly $a^*=-\infty$. 

If $a^*=-\infty$ then there is no $a$ such that 
$\smu\subseteq [a,+\infty)$ because we would have 
$a_d\geq a$ for all $d$. Next, consider the case
 $a^*>-\infty$, and let $d\in\N$ be fixed. Using $\H_d(\theta_{a_d}\,\y)\succeq0$ and
the continuity of $a\mapsto \H_d(\theta_a\,\y)$, 
one obtains $\H_d(\theta_{a^*}\,\y)\succeq0$.
As $d$ fixed was arbitrary, we then obtain
$\H_d(\theta_{a^*}\,\y)\succeq0$ for every $d\in\N$. 
But then by \cite[Theorem 3.2]{lasserre}, $\mu$ is supported on the set $\{x\,:\,\theta_{a^*}(x)\geq0\}$, which shows that 
${\rm supp}\,\mu\subseteq [a^*,+\infty)$. 

Concerning (iii), if $a^*>-\infty$ then $a^*$ is the largest $a$ such that ${\rm supp}\,\mu\subseteq [a,+\infty)$ because if ${\rm supp}\,\mu\subseteq [a,+\infty)$ 
then $a$ is feasible for (\ref{a11}), for every $d\in\N$; therefore,
$a\leq a_d$ for every $d$, which in turn implies $a\leq a^*$.
\end{proof}

Next, in the case where  $\mu$ is known to have
compact support one may even consider the following single hierarchy of semidefinite programs
\begin{equation}
\label{a1}
\rho_d=\min_{b,a}\,\left\{\,b-a\::\: \H_d(\theta_{a}\,\y),\:\H_d(-\theta_b\,\y)\succeq0\:\right\},
\end{equation}
indexed by $d$, and with now two variables $a$ and $b$. We obtain the following result of which the proof is omitted.

\begin{cor}
\label{cor1}
Assume that $\mu$ has compact support. Then:

{\rm (a)} The semidefinite program (\ref{a1}) has an optimal solution $(a_d,b_d)$ for every $d\in\N$.

{\rm (b)} Let $(a_d,b_d)$, $d\in\N$, be a sequence of optimal solutions of (\ref{a1}).
As $d\to\infty$, $(a_d,b_d)\to (a^*,b^*)$ and ${\rm supp}\,\mu\subseteq [a^*,b^*]$.
Moreover, $[a^*,b^*]$ is the smallest interval which contains
${\rm supp}\,\mu$ and if the support of $\mu$ is an interval then ${\rm supp}\,\mu=[a^*,b^*]$.
\end{cor}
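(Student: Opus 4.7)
The plan is to derive Corollary \ref{cor1} essentially as an immediate consequence of Theorem \ref{th2}, after observing that the joint program (\ref{a1}) separates completely in the variables $a$ and $b$.

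First, compactness of $\smu$ together with Proposition \ref{th1} ensures that (\ref{a1}) is feasible at every level $d$: any finite interval $[\alpha,\beta]$ containing $\smu$ produces a feasible pair $(\alpha,\beta)$. The crucial structural observation is that the two semidefinite constraints in (\ref{a1}) involve $a$ and $b$ separately, so the feasible set factors as a Cartesian product
\[
\{a : \H_d(\theta_a\,\y) \succeq 0\} \times \{b : \H_d(-\theta_b\,\y) \succeq 0\}.
\]
By Theorem \ref{th2} (and its analogue for $b$), these two factors are half-lines $(-\infty,a'_d]$ and $[b'_d,+\infty)$, where $a'_d$, $b'_d$ denote the optimal values of the individual SDPs (\ref{a11}), (\ref{a22}). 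Minimizing $b-a$ over such a product is therefore achieved at the endpoints $(a_d,b_d) = (a'_d,b'_d)$, yielding $\rho_d = b'_d - a'_d$; this proves (a).

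For (b), applying parts (i)--(iii) of Theorem \ref{th2} coordinate-wise gives $a_d \downarrow a^*$ and $b_d \uparrow b^*$, where $a^*$ (resp.\ $b^*$) is the largest (resp.\ smallest) scalar with $\smu \subseteq [a^*,+\infty)$ (resp.\ $\smu \subseteq (-\infty,b^*]$). Intersecting these two one-sided conclusions shows that $[a^*,b^*]$ is the smallest closed interval containing $\smu$. For the final assertion, if $\smu$ is itself an interval, then it is automatically closed and, by compactness, bounded, hence of the form $[c,d]$; the minimality just established then forces $a^* = c$ and $b^* = d$, so $\smu = [a^*,b^*]$.

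Since the corollary essentially repackages Theorem \ref{th2}, no single step is genuinely difficult; the only point that deserves an explicit line of verification is the decoupling of (\ref{a1}) into the two univariate SDPs of Theorem \ref{th2}, which is precisely what legitimises invoking the two hierarchies independently.
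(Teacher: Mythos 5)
The paper states Corollary~\ref{cor1} with the remark that ``the proof is omitted,'' so there is no authorial proof to compare against; your argument supplies one. It is correct and follows the natural route that the omission clearly presupposes: the two semidefinite constraints in (\ref{a1}) involve $a$ and $b$ separately, so the feasible region is the Cartesian product of the feasible sets of (\ref{a11}) and (\ref{a22}), each a closed half-line (nonempty because $\smu$ is contained in some finite $[\alpha,\beta]$, and a half-line because $\H_d(\theta_{a'}\,\y)=\H_d(\theta_a\,\y)+(a-a')\H_d(\y)\succeq 0$ whenever $a'\le a$, and symmetrically for $b$); minimizing $b-a$ over such a product is therefore achieved at the individual optimizers, after which Theorem~\ref{th2} gives the monotone convergence and the characterization of $a^*$ and $b^*$. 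One minor attribution note: you cite Theorem~\ref{th2} for the half-line structure of the feasible sets, but that monotonicity observation actually appears in the proof of Proposition~\ref{th1}; Theorem~\ref{th2} only asserts attainment of the max/min. Your treatment of the final claim (a compact interval support must equal $[a^*,b^*]$ by the two inclusions) is also correct. No gaps.
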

\vspace{0.2cm}

\subsection{Duality}
We interpret the dual of the semidefinite program (\ref{a11}). Let $\mathcal{S}_d\subset\R^{(d+1)\times (d+1)}$ be the cone of real symmetric matrices.
The dual of (\ref{a11}) is the semidefinite program:
\begin{equation}
\label{a2}
\begin{array}{rl}
a^*_d=\displaystyle\min_{\Z\in\mathcal{S}_d}&\langle \H_d( x\,\y),\Z\rangle\\
\mbox{s.t.}&\langle \H_d(\y),\Z\rangle\,=\,1\,;\:\Z\succeq0.\end{array}\end{equation}
If $\Z\succeq0$ is feasible, using the singular value decomposition
of $\Z$ one may write $\Z=\sum_i\f_i\f^T_i$ for some vectors
$(\f_i)\subset\R^{d+1}$, and so
\[1=\langle \H_d(\y),\Z\rangle\,=\,\sum_i\langle \f_i,\H_d(\y)\f_i\rangle=\sum_i\int f_i^2\,d\mu\,=\,
\int\sigma\,d\mu,\]
with $\sigma=\sum_if_i^2\in\Sigma[x]_d$, and similarly
\[\langle \H_d(x\,\y),\Z\rangle\,=\,\sum_i\langle \f_i,\H_d(x\,\y)\f_i\rangle=\sum_i\int xf_i(x)^2d\mu(x)\,=\,
\int x\sigma(x) d\mu(x).\]
Therefore, equivalently, (\ref{a2}) reads
\begin{equation}
\label{a3}
a^*_d=\displaystyle\min_{\sigma\in\Sigma[x]_d}\,\left\{\displaystyle\int x\,\underbrace{\sigma(x)d\mu(x)}_{d\nu(x)}
\::\:\int \,\underbrace{\sigma\,d\mu}_{d\nu}\,=\,1\,\right\},
\end{equation}
and $a^*_d\geq a_d$ (which is called {\it weak duality}). Indeed, 
for any two feasible solutions $a,\Z$ of (\ref{a1}) and 
(\ref{a2}) respectively, using
$\Z\succeq0$ and $\H_d(\theta_a\,\y)\succeq0$, yields
\[0\leq\,\langle \Z,\H_d(\theta_a\,\y)\rangle\,=\,\int (x-a)\,\sigma(x)\,d\mu(x)\,=\,\int x\sigma(x)\,d\mu(x)-a,\]
that is, $a\leq\int x\sigma(x)d\mu(x)$.
So in the dual semidefinite program (\ref{a3}), one searches for a sum of squares polynomial $\sigma\in\Sigma[x]_d$ of degree at most $2d$ (normalized to satisfy $\int \sigma d\mu=1$), which minimizes $\int x\sigma d\mu$. Equivalently, one searches for a probability 
measure $\nu$ with density $\sigma\in\Sigma[x]_d$ with respect to $\mu$, which minimizes the upper bound $\int xd\nu$ on the global minimum of $x$ on the support of $\mu$.

Similarly, the dual of (\ref{a22}) is the semidefinite program:
\begin{equation}
\label{a5}
b^*_d=\displaystyle\max_{\sigma\in\Sigma[x]_d}\,\left\{\displaystyle\int x\,\underbrace{\sigma(x)d\mu(x)}_{d\nu(x)}
\::\:\int \underbrace{\sigma\,d\mu}_{d\nu}\,=\,1.\,\right\}.
\end{equation}
Again, by weak duality, $b^*_d\leq b_d$ and in (\ref{a5})
one searches for 
a probability measure $\nu$ with density $\sigma\in\Sigma[x]_d$ with respect to $\mu$, which maximizes the lower bound $\int xd\nu$ on the global maximum of $x$ on the support of $\mu$.

\begin{thm}
\label{th-dual}
Suppose that $\mu$ is such that
$\H_d(\y)\succ0$ for all $d$ (for instance
if $\mu$ has no atom). Then there is no duality gap between
(\ref{a11}) and (\ref{a3}) (resp. (\ref{a22}) and (\ref{a5})),
i.e. $a_d=a^*_d$ (resp. $b_d=b^*_d$). In addition
(\ref{a3}) (resp. (\ref{a5})) has an optimal solution
$\sigma^*\in\Sigma[x]_d$ (resp. $\psi^*\in\Sigma[x]_d$), and
\begin{equation}
\label{thdual-1}
\int (x-a_d)\,\sigma^*(x)\,d\mu(x)\,=\,0\,=\,
\int (b_d-x)\,\psi^*(x)\,d\mu(x).
\end{equation}
\end{thm}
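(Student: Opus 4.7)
The plan is to deduce this as a direct consequence of Slater's strong duality theorem for semidefinite programming. First I would record that the hypothesis $\H_d(\y)\succ0$ does hold when $\mu$ has no atom: if $f\in\R[x]_d$ is nonzero, then $f^2$ vanishes on at most $2d$ points, hence on a set of $\mu$-measure zero, so
\[\langle \f,\H_d(\y)\f\rangle\,=\,\int f(x)^2\,d\mu(x)\,>\,0.\]

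Next I would verify that Slater's condition is satisfied by the primal (\ref{a11}). The key identity is
\[\H_d(\theta_a\,\y)\,=\,\H_d(x\,\y)-a\,\H_d(\y),\]
which, combined with $\H_d(\y)\succ0$, shows that for any $a$ strictly smaller than the smallest generalized eigenvalue $\lambda_{\min}$ of the pencil $(\H_d(x\,\y),\H_d(\y))$, the matrix $\H_d(\theta_a\,\y)$ is positive definite. Thus (\ref{a11}) has a strictly feasible point, and by Theorem \ref{th2} it also has an optimal solution $a_d$. The standard SDP strong duality theorem (see e.g.\ \cite{boyd}) then gives that the dual (\ref{a2})---and therefore the equivalent formulation (\ref{a3})---attains its optimum at some $\Z^*\succeq0$ (equivalently, at some $\sigma^*\in\Sigma[x]_d$), and that $a_d=a_d^*$.

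To obtain the complementary slackness identity (\ref{thdual-1}), I would exploit the equality of optimal values at the primal-dual pair $(a_d,\Z^*)$:
\[0\,=\,a_d^*-a_d\,=\,\langle \H_d(x\,\y),\Z^*\rangle-a_d\,\langle \H_d(\y),\Z^*\rangle\,=\,\langle \H_d(\theta_{a_d}\,\y),\Z^*\rangle.\]
Using the spectral decomposition $\Z^*=\sum_i\f_i\f_i^T$ and setting $\sigma^*=\sum_if_i^2\in\Sigma[x]_d$ exactly as in the passage from (\ref{a2}) to (\ref{a3}), this rewrites as $\int(x-a_d)\sigma^*(x)\,d\mu(x)=0$. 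The statements for $b_d$ and $\psi^*$ follow by the symmetric argument, replacing $\theta_a$ by $-\theta_b$ and maximization by minimization.

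There is no real obstacle in this plan: the content reduces to checking Slater's condition, which is essentially immediate from $\H_d(\y)\succ0$, plus invoking a textbook strong duality theorem. The only point that deserves care is the translation between matrix variables $\Z$ and s.o.s.\ polynomials $\sigma$, but that translation has already been carried out in the paragraph preceding (\ref{a3}).
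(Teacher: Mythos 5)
Your proof is correct and follows the same overall plan as the paper---verify Slater's condition for the primal (\ref{a11}), invoke the standard strong duality theorem for semidefinite programming to get zero duality gap and dual attainment, then extract (\ref{thdual-1}) as a complementary slackness identity---but you take a cleaner route in the one step that requires work, namely establishing strict feasibility. The paper invokes Tchakaloff's theorem to replace $\mu$ by a finitely atomic measure $\nu$ with the same moments up to degree $2d+1$, supported on points $x_0\leq\cdots\leq x_t$, and then argues that $\H_d(\theta_a\,\y)\succ0$ for any $a<x_0$ by writing the quadratic form as $\sum_k\lambda_kf(x_k)^2(x_k-a)$ and using $\H_d(\y)\succ0$ to rule out $f$ vanishing on all the atoms. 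You instead use the linearity identity $\H_d(\theta_a\,\y)=\H_d(x\,\y)-a\,\H_d(\y)$ and observe that, since $\H_d(\y)\succ0$, the pencil is positive definite for every $a$ below the smallest generalized eigenvalue of $(\H_d(x\,\y),\H_d(\y))$. Your argument is more elementary and self-contained (no Tchakaloff needed, and it also makes visible why $\H_d(\y)\succ0$ is exactly the right hypothesis), whereas the paper's atomic representation gives the more concrete bound ``any $a<x_0$ is strictly feasible.'' Your added remark justifying the parenthetical ``if $\mu$ has no atom $\Rightarrow \H_d(\y)\succ0$'' is a nice touch that the paper leaves implicit. The complementary slackness computation is the same in both.
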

\begin{proof}
From Theorem \ref{th2}, we know that (\ref{a11}) has an optimal solution $a_d$. By Tchakaloff's theorem,
let $\nu$ be the measure supported on the finitely many points $x_0,\ldots,x_{t}$ (with $t\leq 2d+2$), and with same moments as $\mu$, up to degree $2d+1$. There are positive weights $\lambda_k$, $k=1,\ldots,t$, such that 
\[\int pd\mu=\int pd\nu=\sum_{k=0}^{t}\lambda_kp(x_k)
\qquad\forall \,p\in\R[x]_{2d+1}\]
(see e.g. \cite{bayer,putinar}).
Hence $\H_d(\theta_a\,\y)\succ0$ for every $a<x_0$,
because for every $\f\,(\neq0)\in\R^{d+1}$ (hence every $f\,(\neq0)\in\R[x]_d$), 
$\langle \f,\M_d(\y)\f\rangle=\sum_k\lambda_kf(x_k)^2>0$, and so
\begin{eqnarray*}
\langle \f,\H_d(\theta_a\,\y)\f\rangle&=&
\int f(x)^2(x-a)\,d\mu(x)\\
&=&\int f(x)^2(x-a)\,d\nu(x)\,=\,\sum_{k=0}^{t}f(x_k)^2(x_k-a)\lambda_k\,>\,0.\end{eqnarray*}
Hence every $a<x_0$ is strictly feasible for (\ref{a11}), that is,
Slater's condition\footnote{For a convex optimization 
problem $\min_\x\{\, f(\x)\,:\, g_j(\x)\geq0,\,j=1,\ldots,m\}$, Slater's condition states that there exists $\x_0$ such that
$g_j(\x_0)>0$ for every $j=1,\ldots,m$.} holds. But 
this implies that there is no duality gap, i.e.,
$a_d=a^*_d$, and in addition, the  dual (\ref{a3}) has an optimal solution $\sigma^*$; see e.g. \cite{boyd}. 
For same reasons, $b_d=b^*_d$ and (\ref{a5}) has an optimal solution. Therefore,
\[0\,=\,\int x\sigma^*(x)d\mu(x)-a^*_d\,=\,
\int (x-a^*_d)\sigma^*(x)d\mu(x)\,=\,
\int (x-a_d)\sigma^*(x)d\mu(x),\]
and similarly,
\[0\,=\,b^*_d-\int x\psi^*(x)d\mu(x)\,=\,
\int (b^*_d-x)\psi^*(x)d\mu(x)\,=\,
\int (b_d-x)\psi^*(x)d\mu(x),\]
which is the desired result (\ref{thdual-1}).
\end{proof}
In Theorem \ref{th-dual},
write $\sigma^*\in\Sigma[x]_d$ as $\sigma^*=\sum_\ell p_\ell^2$
for some polynomials $(p_\ell)\subset\R[x]_d$, with
respective coefiicient vectors $\p_d\in\R^{d+1}$.
Then by (\ref{thdual-1})
\[\int \sigma^*(x)(x-a_d)d\mu(x)\,=\,\sum_\ell\,\int p_\ell(x)^2(x-a_d)d\mu(x)\,=\,
\sum_\ell\,\langle \p_\ell,\H_d(\theta_{a_d}\,\y)\p_\ell\rangle=0,\]
so that for every $\ell$, $\langle\p_\ell,\H_d(\theta_{a_d}\,\y)\p_\ell\rangle=0$ (since $\H_d(\theta_{a_d}\,\y)\succeq0$), that is, every $\p_\ell$ is in the kernel of $\H_d(\theta_{a_d}\,\y)$.

\end{document}